\newtheorem{theorem}{Theorem}[section]
\newtheorem{proposition}[theorem]{Proposition}
\newtheorem{lemma}[theorem]{Lemma} 
\newtheorem{corollary}[theorem]{Corollary}
\newtheorem{definition}[theorem]{Definition}
\newcommand{\cd}{{\mathrm{cd}\,}}
\newcommand{\nn}{{\mathbb N}}
\newcommand{\zz}{{\mathbb Z}}
\title{Subgroups of almost finitely presented groups}
\author{Ian J. Leary\thanks{Partially supported by a Research Fellowship 
from the Leverhulme Trust.  This work was done at MSRI, Berkeley, 
where research is supported by 
the National Science Foundation under Grant No.~DMS-1440140.}}
\date{\today}
\begin{document} 

\maketitle

\begin{abstract} 
We show that every countable group embeds in a group of type $FP_2$. 
\end{abstract}

\section{Introduction} 

In the late 1940's, Higman, Neumann and Neumann showed that every 
countable group embeds in a 2-generator group, in the same paper 
in which they introduced HNN-extensions~\cite{hnn}.  Neumann had 
already shown that there are uncountably many 2-generator groups, 
from which it follows that they cannot all embed in finitely presented 
groups~\cite{neumann}.  It was not until the early 1960's that Higman 
was able to characterize the finitely generated subgroups of 
finitely presented groups~\cite{higman}.  The  
Higman embedding theorem is a high-point of combinatorial group
theory that makes precise the connection between group presentations
and logic: it states that a finitely generated group $G$ embeds in some 
finitely presented group if and only if $G$ is recursively presented, 
i.e., there is an algorithm to write down the relations that hold 
in~$G$~\cite{higman}.  

A group $G$ is almost finitely presented\footnote{This definition was 
used by Bieri and Strebel~\cite{bieristrebel}; many authors use the 
phrase `almost finitely presented' for the weaker condition that the 
augmentation ideal in the mod-2 group algebra is finitely presented.}  
or $FP_2$ if its augmentation 
ideal $I_G$ is finitely presented as a module for its group algebra
$\zz G$ (see~\cite[VIII.5]{brown} or~\cite{bieristrebel} for more details).  
Every finitely
presented group is $FP_2$, and every $FP_2$ group is finitely
generated.  Bestvina and Brady gave the first examples of $FP_2$
groups that are not finitely presented~\cite{BB}, although these
examples arose as subgroups of finitely presented groups.
In~\cite{ufp} the author constructed groups of type $FP_2$ that do not
embed in any finitely presented group.  Given these examples it
becomes natural to look for an analogue of the Higman embedding
theorem for $FP_2$ groups.  Our main theorem answers this question.

\begin{theorem}\label{thm:main}
Every countable group embeds in an $FP_2$ group.  
\end{theorem}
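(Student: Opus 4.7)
The natural approach is to combine the Higman--Neumann--Neumann embedding with a Bestvina--Brady style construction extending the work of \cite{ufp}. By the classical HNN embedding it suffices to treat the case of a $2$-generator countable group $H$, given by a countable presentation $\langle a, b \mid r_1, r_2, \ldots\rangle$; the obstruction to $H$ itself being $FP_2$ is that the $\zz H$-module of identities among its relators need not be finitely generated.

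My plan is to realize $H$ inside a Bestvina--Brady style kernel. Given a flag complex $L$, let $A_L$ be the associated right-angled Artin group and $BB_L = \ker(A_L \to \zz)$ its Bestvina--Brady kernel; the classical theorem asserts that $BB_L$ is of type $FP_2$ whenever $L$ is simply connected. I would choose $L$ to be a $2$-dimensional simply connected flag complex with a combinatorial structure keyed to the relators $r_i$ of $H$. Following \cite{ufp}, this requires a branching construction in which one starts from a small simply connected flag complex and blows up vertices or attaches infinite ``trees of $2$-cells'' driven by each relator, producing an infinite simply connected $L$ equipped with an action of $H$ whose induced map into $BB_L$ is injective.

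The steps would be: (i) construct $L$ explicitly, ensuring $\pi_1(L) = 1$; (ii) verify the $FP_2$ property for (a suitable quotient of) the Bestvina--Brady kernel in the infinite setting by running the Morse-theoretic argument of Bestvina--Brady carefully, exploiting the simple-connectivity of $L$; (iii) exhibit the embedding $H \hookrightarrow BB_L$, and thus $G \hookrightarrow BB_L$. The principal obstacle is step (ii): when $L$ is infinite, $A_L$ itself is not finitely generated, so one cannot directly invoke the classical statement, and one has to work module-theoretically to show that a well-chosen finitely generated subgroup admits a partial free resolution finitely generated through dimension two. Balancing simple-connectivity of $L$ against the requirement that it admit a faithful $H$-action giving the desired subgroup relation is the technical heart of the argument.
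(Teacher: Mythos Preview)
Your proposal is a research outline rather than a proof: you yourself flag step~(ii) as ``the principal obstacle'' and do not resolve it.  The difficulty is genuine and, as stated, fatal.  For an infinite flag complex $L$ the right-angled Artin group $A_L$ is not finitely generated, hence neither is $BB_L$, so $BB_L$ cannot be $FP_2$.  You propose passing to ``a well-chosen finitely generated subgroup'', but you name no candidate, give no argument that it is $FP_2$, and give no argument that $H$ sits inside it.  The tension you describe at the end---simple connectivity of $L$ versus a faithful $H$-action producing the embedding---is not a detail to be filled in later; it is the entire content of the theorem.  Note also that \cite{ufp} does not do what you attribute to it: the groups $G_L(S)$ there are built from a \emph{finite} flag complex $L$ with perfect fundamental group, with the parameter $S\subseteq\zz$ controlling which ``levels'' receive relations; there is no infinite $L$ and no branching construction keyed to relators of an external group.

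The paper's route is quite different.  It uses \cite{ufp} only as a black box (Theorem~\ref{thm:fromufp}): for each $S\subseteq\zz$ there is a fixed $FP_2$ group $J=J(l,S)$ and elements $j_1,\ldots,j_l$ with $j_1^s\cdots j_l^s=1$ iff $s\in S$.  This single fact replaces the Diophantine machinery in Valiev's proof of the Higman embedding theorem, and the rest is classical combinatorial group theory following \cite[IV.7]{lynsch}.  One defines a subgroup $H\leq G$ to be \emph{homologically benign} if the HNN-extension $\langle G,t: t^{-1}ht=h,\ h\in H\rangle$ embeds in an $FP_2$ group, proves closure properties for this notion, and then, via a G\"odel numbering of words and a chain of explicit HNN and amalgam constructions, shows that \emph{every} subgroup $N$ of the free group $\langle a,b\rangle$ is homologically benign.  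A homological version of the Higman rope trick (Lemma~\ref{lem:higrope}) then embeds $\langle a,b\rangle/N$ in an $FP_2$ group, and the Higman--Neumann--Neumann reduction finishes the argument.
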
 

Although the statement is similar to the Higman-Neumann-Neumann
embedding theorem, the proof is much closer to the Higman embedding 
theorem.   In fact it is modelled on Valiev's proof of the 
Higman embedding theorem as described in~\cite[Sec.~IV.7]{lynsch},
which is a simplification of Valiev's first proof~\cite{valiev}. 
Our proof is simpler than these antecedents because we are 
not obliged to consider recursively enumerable sets.  We make the following 
definition, which is an analogue of Higman's notion of a benign
subgroup.  

\begin{definition} 
A subgroup $H$ of a finitely generated group $G$ is a 
{\sl homologically benign subgroup}  if the HNN-extension 
$$G_H=\langle G,t\colon t^{-1}ht=h\,\, h\in H\rangle$$ 
can be embedded in an $FP_2$ group.  
\end{definition} 

Theorem~\ref{thm:main} implies that all subgroups of finitely
generated groups are homologically benign, however showing that
various subgroups are homologically benign plays a major role in the
proof of Theorem~\ref{thm:main}.  The result below details what we
need from~\cite{ufp}; after the statement we outline how to deduce it
from results stated in~\cite{ufp}.

\begin{theorem}\label{thm:fromufp} 
For any fixed $l\geq 4$ and any set $S$ of integers with $0\in S$,
there is an $FP_2$ group $J=J(l,S)$ and a sequence $j_1,\ldots,j_l$ of
elements of $J$ such that $j_1^sj_2^s\cdots j_l^s=1$ if and only if
$s\in S$.
\end{theorem}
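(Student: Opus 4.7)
The plan is to specialize the general construction of \cite{ufp} to the flag complex best adapted to the product $j_1^s j_2^s \cdots j_l^s$. That paper associates, to a suitable flag complex $L$ and a set $S \subseteq \zz$ containing $0$, a group of type $FP_2$ whose defining relations are indexed by $S$; the task here is to extract from that machinery a group whose presentation has the specific combinatorial signature required.

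First, I would take $L$ to be the $l$-cycle $C_l$, with vertices $v_1, \dots, v_l$ and edges $\{v_i, v_{i+1}\}$ (indices mod $l$); the hypothesis $l \ge 4$ is precisely what is needed for $C_l$ to be a flag complex, since for $l = 3$ the triangle would have to be filled in. The associated right-angled Artin group $A_L$ has generators $v_1, \dots, v_l$ subject only to $[v_i, v_{i+1}] = 1$ and is torsion-free, so the word $v_1^s v_2^s \cdots v_l^s$ is nontrivial for every $s \ne 0$. Applying the main construction of \cite{ufp} to this $L$ and the given $S$ produces an $FP_2$ group $J = J(l, S)$ together with distinguished elements $j_1, \dots, j_l$ arising from the vertices of $L$ (possibly after conjugation by HNN stable letters used in the construction). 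The presentation of $J$ is engineered so that, for each $s \in S$, a relation is imposed that, in the natural choice of generators, takes the form $j_1^s j_2^s \cdots j_l^s = 1$; this handles the forward implication essentially by inspection, and $s = 0$ is handled trivially by the convention $0 \in S$.

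The harder direction is the converse: if $s \notin S$, then $j_1^s \cdots j_l^s \ne 1$ in $J$. This requires the detailed combinatorial and homological control supplied by \cite{ufp} -- either a normal-form theorem for words in $J$, or a homomorphism from $J$ onto an explicit quotient in which the image of $j_1^s \cdots j_l^s$ is demonstrably nontrivial whenever $s \notin S$. I expect this control to come from the description of $J$ as the fundamental group of a branched cover of the Salvetti complex of $A_L$, where the branching data separate the contributions indexed by different integers $s$. The main obstacle lies exactly here: one must verify that imposing the potentially infinite family of relations for $s \in S$ does not accidentally force any additional power-relation, and simultaneously that the resulting presentation remains $FP_2$. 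Both facts are packaged together in \cite{ufp} by the same branched-cover/cubical analysis, so once the construction is invoked the theorem follows without further work.
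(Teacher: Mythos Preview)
Your choice of flag complex is the fatal gap. Taking $L=C_l$ does make $L$ flag once $l\geq 4$, but the criterion in \cite{ufp} for $G_L(S)$ to be $FP_2$ is that $\pi_1(L)$ be \emph{perfect} (this is \cite[theorem~1.3]{ufp}, as quoted in the paper's own proof). The fundamental group of the $l$-cycle is $\zz$, which is abelian and nontrivial, so $G_{C_l}(S)$ is \emph{not} $FP_2$ and the construction collapses at exactly the point you flagged as ``the main obstacle.'' The paper instead uses a $2$-dimensional flag complex $L$ with perfect $\pi_1$ that happens to contain a non-nullhomotopic edge loop of length~$l$; an explicit example for $l=4$ is given in \cite[section~2]{ufp}, and subdivisions handle larger~$l$. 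The hypothesis $l\geq 4$ enters not through the flagness of a bare cycle but through the existence of such complexes.

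A second, related discrepancy: the distinguished elements $j_1,\ldots,j_l$ are the \emph{directed edges} of the chosen edge loop in $L$, not vertices. The presentation of $G_L(S)$ in \cite[theorem~1.2]{ufp} has generators indexed by directed edges, and it is \cite[Lemma~14.4]{ufp} that gives the precise equivalence $j_1^s\cdots j_l^s=1\iff s\in S$ for a loop that does not bound a disc. Your sketch of the converse direction via a branched-cover normal form is in the right spirit, but without the correct $L$ and the edge-generator presentation there is no actual argument to invoke.
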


\begin{proof} 
The groups $G_L(S)$ that are constructed in~\cite{ufp} depend on a
connected flag simplicial complex $L$ and a set $S\subseteq \zz$.  If
$L$ has perfect fundamental group and contains an edge loop of length
$l$ that is not homotopic to a constant map, then $J=G_L(S)$ has the
claimed properties.  See~\cite[section~2]{ufp} for an explicit example
of a suitable $L$ in the case $l=4$; examples for larger $l$ can be
obtained by taking subdivisions of this $L$.

We expand a little by giving the precise results within~\cite{ufp}
that guarantee the various properties of the group~$J=J(l,S)$.  When
$0\in S\subseteq \zz$, \cite[theorem~1.2]{ufp} gives a presentation for
$G_L(S)$ with generators the directed edges of~$L$.
By~\cite[theorem~1.3]{ufp}, the group $G_L(S)$ is $FP_2$ if and only if
the fundamental group of $L$ is perfect.  If $j_1,\ldots,j_l$ is a
directed loop in $L$ that does not bound a disk then
by~\cite[Lemma~14.4]{ufp}, the word $j_1^sj_2^s\cdots j_l^s$ in the 
given generators for $J$ is equal to
the identity if and only if $s\in S$.
\end{proof}

Theorem~\ref{thm:fromufp} enables one to encode arbitrary subsets of
the natural numbers $\nn$ in presentations for $FP_2$ groups.  This
theorem replaces those parts of Valiev's proof that concern
Diophantine equations or those parts of Higman's proof that concern
recursive functions, each of which is used to encode recursively
enumerable subsets of $\nn$ in finite presentations.

\section{The proofs} 

Since this section is closely modelled on Lyndon and Schupp's 
account of the Higman embedding theorem~\cite[Sec.~IV.7]{lynsch}, 
we have tried to stay close to the notation that they use.  We 
also omit arguments that are identical to those in~\cite{lynsch}.  

Since we will be working with presentations, it is convenient to 
have a characterization of the $FP_2$ property in terms of presentations.  
Recall that the Cayley complex for a presentation of a group $G$ is the 
universal cover of the presentation 2-complex.  The group $G$ acts 
freely on its Cayley complex, with one orbit of vertices and with 
orbits of 1- and 2-cells corresponding to the generators and relators 
respectively in the presentation.  We define a partial Cayley complex
to be a $G$-invariant subcomplex of the Cayley complex; partial 
Cayley complexes are in bijective correspondence with subcomplexes of
the presentation complex.  

\begin{proposition}\label{prop:fp2}
Let $H$ be given by a presentation with finitely many 
generators and a countable set of relators $r_1,r_2,\ldots$.  
The following are equivalent.  
\begin{itemize} 
\item[(i)] $H$ is $FP_2$.  
\item[(ii)] There exists $m$ so that 
for each $i>m$, the loop defined by $r_i$ represents zero in 
the homology of the partial Cayley complex corresponding to 
all the generators and the relators $r_1,\ldots,r_m$.  
\item[(iii)] There is a connected free $H$-CW-complex with finitely 
many orbits of cells and perfect fundamental group.  
\end{itemize}
\end{proposition}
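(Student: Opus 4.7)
The plan is to prove the cyclic chain (i) $\Leftrightarrow$ (ii), (ii) $\Rightarrow$ (iii), (iii) $\Rightarrow$ (i), with the whole argument resting on the standard identification of the first homology of a partial Cayley complex with a cokernel inside the relation module. Write $F$ for the free group on the chosen $n$ generators, let $R = \ker(F\to H)$, and let $M = R/[R,R]$ be the relation module, a $\zz H$-module via the conjugation action of $F$. The short exact sequence
$$0 \to M \to \zz H^n \to I_H \to 0,$$
where $\zz H^n \to I_H$ sends the $i$-th basis element to $g_i-1$, shows that $H$ is $FP_2$ if and only if $M$ is finitely generated as a $\zz H$-module (see \cite[VIII.5]{brown}). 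Since the images of the $r_i$ generate $M$, this happens if and only if some finite initial segment $r_1,\ldots,r_m$ already generates $M$.

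For (i) $\Leftrightarrow$ (ii), let $R_m$ denote the normal closure in $F$ of $r_1,\ldots,r_m$. Then $X_m$ is a regular cover with deck group $H$ of the presentation 2-complex for $\langle g_1,\ldots,g_n\mid r_1,\ldots,r_m\rangle$, so $\pi_1(X_m)\cong R/R_m$. Hurewicz together with abelianization yields a $\zz H$-equivariant isomorphism $H_1(X_m) \cong R/(R_m[R,R]) \cong M/M_m$, where $M_m$ is the $\zz H$-submodule of $M$ generated by the images of $r_1,\ldots,r_m$; under this isomorphism the homology class of the loop labelled $r_i$ corresponds to the image of $r_i$ in $M/M_m$. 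Therefore (ii) asserts that some $m$ satisfies $M=M_m$, which by the previous paragraph is equivalent to (i).

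For (ii) $\Rightarrow$ (iii), I would simply take $Z := X_m$ for the $m$ supplied by (ii): it is connected, free as an $H$-CW-complex, has finitely many orbits of cells, and has $H_1(Z)=0$, so its fundamental group is perfect by Hurewicz. For (iii) $\Rightarrow$ (i), the cellular chain complex $C_*(Z)$ consists of finitely generated free $\zz H$-modules; connectedness of $Z$ gives exactness of $C_1(Z) \to C_0(Z) \to \zz \to 0$, and vanishing of $H_1(Z) = \pi_1(Z)^{\mathrm{ab}}$ gives exactness at $C_1(Z)$. Hence $C_2(Z) \to C_1(Z) \to C_0(Z) \to \zz \to 0$ is the required partial free resolution.

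The one point that needs real care is the $\zz H$-equivariant identification $H_1(X_m)\cong M/M_m$ and the recognition that the loop $r_i$ represents the image of $r_i$ under it; this is a standard but slightly fiddly computation, essentially Fox calculus on the cellular chain complex of $X_m$, and once it is in place each implication above is a routine application of Hurewicz and the cellular chain complex of a free $H$-CW-complex.
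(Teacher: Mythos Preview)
Your proof is correct and follows essentially the same route as the paper's: both reduce (i)$\Leftrightarrow$(ii) to finite generation of the relation module (the paper phrases this as $d_2(C_2(X_m))=d_2(C_2(X))$, which is exactly your $M_m=M$ once one identifies $\ker d_1\cong M$), and both take $X_m$ for (ii)$\Rightarrow$(iii). The one minor variation is in (iii)$\Rightarrow$(i): you read off a partial free resolution of $\zz$ directly from $C_*(Z)$, whereas the paper collapses a lifted maximal tree to extract a finite presentation of $I_H$; your version is slightly more direct and avoids the tree choice, while the paper's makes the link to the augmentation ideal explicit.
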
 

\begin{proof} 
Equivalence of (i) and (ii).  Let $X$ be the Cayley complex for $H$
and let $X_m$ be the partial Cayley complex containing all 1-cells and 
only the 2-cells that correspond to the relators $r_1,\ldots,r_m$. 
Let $C_*(X)$ and $C_*(X_m)$ denote the cellular chain complexes of 
$X$ and $X_m$.  The image of the map
$d_1:C_1(X)\rightarrow C_0(X)$ is isomorphic to the augmentation 
ideal $I_H$.  Hence $H$ is
$FP_2$ if and only if the kernel of $d_1$ is finitely generated as a
$\zz H$-module.  Since $H_1(X)$ is trivial, this kernel is equal to
the image $d_2(C_2(X))$.  The stated condition on loops is equivalent
to $d_2(C_2(X_m))=d_2(C_2(X))$.  If this holds then clearly
$d_2(C_2(X))$ is finitely generated.  Conversely, any finite subset of
$d_2(C_2(X))$ is contained in some $d_2(C_2(X_m))$, so if
$d_2(C_2(X))$ is finitely generated then there exists $m$ with
$d_2(C_2(X_m))=d_2(C_2(X))$.

(ii)$\implies$(iii) and (iii)$\implies$(i).  Each $X_i$ is a connected
$H$-CW-complex with finitely many orbits of cells, and if (ii) holds
then $H_1(X_m)\cong H_1(X)$ is trivial.  Given any $H$-CW-complex $Y$
as in (iii), pick a maximal subtree $T$ in $Y/H$, let $\widetilde T$
be the set of lifts of $T$ in $Y$, and note that $\widetilde T$ is
equivariantly isomorphic to $T\times H$.  The cellular chain complex
$C_*(Y,\widetilde T)$ gives a finite presentation for the relative
homology group $H_1(Y,\widetilde T)$ as a $\zz H$-module.  Since
$H_1(Y)=0$, $H_1(Y,\widetilde T)$ is isomorphic to $I_H$.
\end{proof} 

Next we give the homological version of the Higman Rope 
Trick~\cite[IV.7.6]{lynsch}.  

\begin{lemma} \label{lem:higrope}
If $R$ is a homologically benign normal subgroup of a finitely
generated group $F$, then $F/R$ is embeddable in an $FP_2$ group.  
\end{lemma}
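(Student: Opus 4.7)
The plan is to mimic the Higman rope trick as presented in Lyndon and Schupp~\cite[IV.7.6]{lynsch}, substituting ``$FP_2$'' for ``finitely presented'' throughout. Fix a finite generating set $X = \{x_1, \ldots, x_n\}$ for $F$ and let $K$ be an $FP_2$ group containing $F_R = \langle F, t : t^{-1}rt = r, r \in R \rangle$, which exists by homological benignness of $R$. The goal is to produce an $FP_2$ group $G$ together with an embedding $F/R \hookrightarrow G$.

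The first task is to verify that the constructions used in the rope trick --- HNN extensions over finitely generated associated subgroups and quotients by the normal closure of finitely many elements --- preserve the $FP_2$ property. Both statements follow directly from the presentation characterization in Proposition~\ref{prop:fp2}. For quotients, one simply augments the distinguished finite subset of relators from clause~(ii) by the new relators; the tail condition is unaffected. For HNN extensions, adjoining the stable letter and the finitely many HNN relations to a suitable presentation of $K$ yields a new presentation, and a Britton-normal-form argument shows that the $FP_2$ tail condition of $K$ lifts to the enlarged Cayley $2$-complex.

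The rope-trick construction then proceeds as follows. The key structural observation, immediate from Britton's lemma applied to the HNN presentation of $F_R$, is that an element $f \in F$ commutes with $t$ in $F_R$ precisely when $f \in R$. Following the classical argument, one adjoins an auxiliary stable letter $y$ via an HNN extension of $K$ that centralizes the finitely generated subgroup $F$; inside this extension the element $ty^{-1}$ has centralizer in $F$ exactly equal to $R$. A further HNN extension, together with finitely many additional relators, is then arranged so that the image of $F$ in the final group $G$ is collapsed precisely by $R$.

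The main obstacle will be the Britton's-lemma bookkeeping required to verify that the image of $F$ in $G$ is exactly $F/R$, rather than some larger or smaller quotient. This is the technical heart of the rope trick, and the argument of~\cite[IV.7.6]{lynsch} transfers with no change of substance: the classical appeal to finite presentability is used only to ensure that each construction step stays within the class of finitely presented groups, and the analogous closure properties for $FP_2$ groups listed above make the same argument go through.
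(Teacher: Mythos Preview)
Your overall strategy --- verify that $FP_2$ is closed under the constructions used in the rope trick, then run the rope trick --- is exactly what the paper does, and your sketch of why an HNN extension of an $FP_2$ group over a finitely generated associated subgroup is again $FP_2$ (the partial Cayley complex built from only the base-group generators and relators is a disjoint union of coset-translates of the base partial Cayley complex, so the tail condition transfers) is essentially the paper's argument.

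However, the construction you describe is not the rope trick of \cite[IV.7.6]{lynsch}, and as written it does not produce a group containing $F/R$.  In the actual argument one sets $L=\langle F,\,t^{-1}Ft\rangle\cong F*_RF$ inside $F_R\leq H$, takes the homomorphism $\phi:L\to F/R$ that is the quotient map on $F$ and trivial on $t^{-1}Ft$, and forms the single HNN extension
\[
K=\bigl\langle\, H\times F/R,\ s\ :\ s^{-1}(l,1)s=(l,\phi(l)),\ l\in L\,\bigr\rangle.
\]
Here $F/R$ visibly embeds as the second factor.  The crucial point is that every relator of $F/R$ is a \emph{consequence} of the $H$-relators together with the finitely many commutation and HNN relators (this is where normality of $R$ and the two-sided nature of $L$ are used), so that $K$ has a presentation whose only possibly-infinite part is the relator set of $H$; Proposition~\ref{prop:fp2} then applies.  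Your account instead adjoins a stable letter $y$ centralising $F$ and invokes the centraliser of $ty^{-1}$ in $F$ --- but that centraliser already equals $C_F(t)=R$ before $y$ is introduced, so $y$ is doing no work in the one concrete statement you make, and the subsequent ``further HNN extension, together with finitely many additional relators'' is never specified.  No group containing $F/R$ has been exhibited.

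A related concern: listing ``quotients by the normal closure of finitely many elements'' among the operations to be used suggests you intend to impose relations at some stage.  The rope trick never does this; it shows relators to be \emph{redundant}, not that they may be safely imposed.  Passing to a genuine quotient would jeopardise the injectivity of $F/R\hookrightarrow G$, which is precisely what the Britton's-lemma bookkeeping you defer to the end is supposed to guarantee.
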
 

\begin{proof} 
Fix $R$ as in the statement, and let $H$ be an $FP_2$ group containing
the group $F_R=\langle F, t\colon t^{-1}rt = r,\,\, r\in R\rangle$.
Let $L$ be the subgroup of $F_R\leq H$ generated by $F$ and
$t^{-1}Ft$, so that $L\cong F*_RF$.  As in \cite[IV.7.6]{lynsch} there
is a homomorphism $\phi:L\rightarrow F/R$ whose restriction to $F$ is
equal to the quotient map $F\rightarrow F/R$ and whose restriction to
$t^{-1}Ft$ is the trivial homomorphism.  Viewing $L$ as a subgroup of
$H$, the map $l\mapsto (l,\phi(l))$ defines a second copy of $L$
inside $H\times F/R$.  Let $K$ be the HNN-extension in which the
stable letter conjugates these two copies:
$$K= 
\langle H\times F/R, s\colon s^{-1}(l,1)s = (l,\phi(l)), \,\, l\in
L\rangle.$$ 
The group $K$ is generated by the generators for $H$, the generators 
for $F/R$ and the element~$s$.  As defining relators we may take the 
relators for $F/R$, the relators for $H$, finitely many relators 
stating that the generators for $H$ and the generators for $F/R$ 
commute, and finitely many relators of the form 
$s^{-1}(l,1)s(l,\phi(l))^{-1}$ for $l$ in some generating set for $L$.  
As in~\cite[IV.7.6]{lynsch}, the relators that hold between the 
generators for $F/R$ can be eliminated from this presentation for $K$, 
leaving just the relators for $H$ and finitely many other relators.  

To see that $K$ is $FP_2$, we use Proposition~\ref{prop:fp2} applied 
to the presentation 2-complex with the generators and relators 
described above.  The generators and relators for $H$ are contained 
in those for $K$, so we may look at the partial Cayley complex for 
$K$ corresponding to just these generators and relators.  This 
2-complex is isomorphic to a disjoint union of copies of the Cayley
complex for $H$ (one copy for each coset of $H$ in~$K$).  Let 
$r_1,r_2\ldots$ be an enumeration of the relators for $H$.  Since 
$H$ is $FP_2$, there exists $m$ so that for $i>m$, the relator $r_i$ 
represents zero in the homology of the partial Cayley complex for 
$H$ with just the relators $r_1,\ldots,r_m$.  It follows that these 
same loops represent zero in the homology of the partial Cayley 
complex for $K$ discussed above.  

Now consider the partial Cayley complex
for $K$, taking all the generators, the commutation relators between 
generators for $H$ and $F/R$, the finitely many relators involving 
$s$, and the relators $r_1,\ldots,r_m$.  For $i>m$, the loops in 
this complex defined by $r_i$ represent the zero element of homology, 
since they already represent 0 in the smaller partial Cayley complex 
consisting of a disjoint union of copies of the Cayley complex for
$H$.  Hence this presentation for $K$ satisfies condition~(ii) of 
Proposition~\ref{prop:fp2}, and so $K$ is $FP_2$.  
\end{proof} 

\begin{lemma}
Let $G$ be a finitely generated group which is embeddable in an $FP_2$
group.   
\begin{itemize} 
\item{} Every finitely generated subgroup of $G$ is homologically
  benign in $G$.   
\item{} If $H$ and $K$ are homologically benign subgroups of $G$, 
then so are their intersection and the subgroup that they generate.  
\end{itemize}
\end{lemma}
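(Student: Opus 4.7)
For the first bullet, I would embed $G$ into an $FP_2$ group $P$, fix a finite generating set $h_1,\ldots,h_n$ for $H$, and form the HNN extension
$$P_H=\langle P,t\colon t^{-1}h_it=h_i,\,\, i=1,\ldots,n\rangle.$$
The group $G_H$ embeds in $P_H$ by the HNN normal form theorem applied to the inclusion $G\hookrightarrow P$ (which restricts to the identity on $H$), so it suffices to check that $P_H$ is $FP_2$. For this I would repeat the argument used in Lemma~\ref{lem:higrope}: take the presentation of $P_H$ obtained by adjoining $t$ and the $n$ new commutation relators to a presentation of $P$ that witnesses condition (ii) of Proposition~\ref{prop:fp2} with threshold $m$. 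The partial Cayley complex for $P_H$ built from just the old $P$-generators and the first $m$ relators of $P$ is a disjoint union of copies of the analogous partial Cayley complex for $P$, one per coset of $P$ in $P_H$, so every later $P$-relator already bounds there.

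For the second bullet, let $U,V$ be $FP_2$ groups containing $G_H,G_K$, with stable letters $t\in U$ and $s\in V$, and form the amalgamated product $W=U*_G V$. Since $G$ is finitely generated (hence of type $FP_1$) and $U,V$ are $FP_2$, a standard Mayer--Vietoris argument for graphs of groups shows that $W$ is also $FP_2$.

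For the intersection $H\cap K$, I plan to show that the subgroup $\langle G,ts^{-1}\rangle\leq W$ is isomorphic to $G_{H\cap K}$, with the HNN stable letter mapped to $ts^{-1}$. That $ts^{-1}$ commutes with every $g\in H\cap K$ is a one-line computation. Injectivity reduces to the normal form theorem for amalgamated products applied to $W$: if $g\in G$ commutes with $ts^{-1}$, then $s^{-1}gs=t^{-1}gt$, both sides lie in $U\cap V=G$, and Britton's lemma applied separately to the HNN extensions $G_H\leq U$ and $G_K\leq V$ forces $g\in H$ and $g\in K$; the same machinery handles longer reduced words.

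The join $\langle H,K\rangle$ is where I expect the real obstacle to lie. Britton's lemma in $W$ caps the $G$-centralizer of every element built from $t$ and $s$ at $H\cap K$, so there is no way to embed $G_{\langle H,K\rangle}$ into $W$ by sending its stable letter to a single such element. My plan is instead to realize $G_{\langle H,K\rangle}$ as the quotient of the finitely generated subgroup $F=G_H*_G G_K\leq W$ by the normal closure $R$ of $\{ts^{-1}\}$ in $F$, and then to apply the Higman rope trick (Lemma~\ref{lem:higrope}) to embed $F/R\cong G_{\langle H,K\rangle}$ in an $FP_2$ group once $R$ has been shown to be homologically benign in $F$. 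Proving this benignness is the step I expect to be hardest; I anticipate it will require an inductive use of the first bullet of the current lemma together with the intersection argument just established, arranged so as to avoid circularity.
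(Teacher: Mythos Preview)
Your treatment of the first bullet and of the intersection is correct and is essentially what Lyndon--Schupp do in Lemma~IV.7.7, which is what the paper defers to: form $W=U*_GV$, check it is $FP_2$ via Proposition~\ref{prop:fp2}, and identify $\langle G,\,ts^{-1}\rangle$ with $G_{H\cap K}$ by Britton's lemma in the two HNN factors.

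The join case, however, has a genuine gap. You correctly observe that $G_{\langle H,K\rangle}\cong F/R$ with $F=G_H*_GG_K$ and $R$ the normal closure of $ts^{-1}$, and that the rope trick would finish the job if $R$ were homologically benign in~$F$. But $R$ is a normal closure and is typically not finitely generated, so neither the first bullet nor the intersection case applies to it; you offer no mechanism for establishing its benignness, and I do not see one that does not already presuppose the conclusion. This route is essentially circular: knowing that $F_R$ embeds in an $FP_2$ group is almost the same datum as knowing $G_{\langle H,K\rangle}$ does.

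The argument in Lyndon--Schupp (and hence in the paper) avoids quotients altogether and instead \emph{reduces the join to the intersection}. Inside $W$ one sets
\[
L=\langle\, t^{-1}Gt,\; s^{-1}Gs\,\rangle,
\]
a finitely generated subgroup since $G$ is. A Britton's-lemma computation in the double HNN extension $G_H*_GG_K$ shows that $L\cap G=\langle H,K\rangle$: the inclusion $\supseteq$ is immediate from $t^{-1}Gt\cap G=H$ and $s^{-1}Gs\cap G=K$, and for $\subseteq$ one checks that any alternating word in elements of $t^{-1}Gt$ and $s^{-1}Gs$ that lies in $G$ must reduce through pinches, each of which contributes an element of $H$ or of $K$. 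Now $W$ is $FP_2$, so by the first bullet both $L$ and $G$ are homologically benign in $W$; by the intersection case (applied with $W$ playing the role of the ambient group) so is $L\cap G=\langle H,K\rangle$. Finally $G_{\langle H,K\rangle}\leq W_{\langle H,K\rangle}$, so $\langle H,K\rangle$ is homologically benign in $G$ as well. No rope trick, no normal closures.
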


\begin{proof} 
Almost identical to the proof of \cite[Lemma~IV.7.7]{lynsch}, except 
that it relies on the fact that a free product with amalgamation 
$P=M*_GN$ is $FP_2$ provided that $M$ and $N$ are $FP_2$ and $G$ 
is finitely generated rather than on a similar statement for 
finite presentability.  This can be proved easily using 
Proposition~\ref{prop:fp2}. 
\end{proof} 

\begin{lemma} \label{lem:anys}
Fix $l\geq 4$, and for any integer $s$ define 
$v_s:= c_0^sc_1^s\cdots c_l^s d e^s$, an element of the free 
group $H=\langle c_0,\ldots,c_l, d, e\rangle$ of rank $l+3$.  
For any $S\subseteq \zz$ with $0\in S$, the subgroup 
$$V_S:=\langle v_s \colon s\in S\rangle 
\leq \langle c_0,\ldots,c_l, d, e\rangle$$ 
is homologically benign and is freely generated by the given
elements.  
\end{lemma}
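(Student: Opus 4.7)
The plan is to prove the two assertions separately.

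For the free generation of $V_S$ by the elements $v_s$, I would use a marker argument based on the letter $d$. Each $v_s$ has reduced form $c_0^s c_1^s \cdots c_l^s d e^s$ containing exactly one occurrence of $d$, and correspondingly each $v_s^{-1}$ contains exactly one $d^{-1}$. In any reduced word $v_{s_1}^{\epsilon_1} v_{s_2}^{\epsilon_2} \cdots v_{s_n}^{\epsilon_n}$ formed over the alphabet $\{v_s^{\pm 1} : s \in S\}$, successive $d^{\pm 1}$-markers are separated by non-empty subwords of $H$ built only from letters $c_i^{\pm s_j}$ and $e^{\pm s_j}$, none of which equal $d^{\pm 1}$. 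Hence free reduction in $H$ cannot bring two distinguished markers into contact, so the product is a non-trivial reduced word in $H$, giving the freeness claim.

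For the benignness of $V_S$, my plan is to apply Theorem~\ref{thm:fromufp} to produce an $FP_2$ group $J = J(l',S)$ for some $l' \geq l$, together with elements $j'_1,\ldots,j'_{l'}$ satisfying $(j'_1)^s \cdots (j'_{l'})^s = 1 \Leftrightarrow s \in S$. The task is then to construct an $FP_2$ ambient group $K$ containing $H_{V_S}$ by combining $H$ and $J$ through a sequence of HNN extensions and amalgamated products, arranged so that the defining relation in $J$ becomes the commutation $[t, v_s] = 1$ for $s \in S$ in $K$. The simplest attempt is to form the $FP_2$ group $H \times J$, identify the graph subgroup $\tilde H = \{(h, \phi(h)) : h \in H\}$, where $\phi \colon H \to J$ is chosen to send each $v_s$ to $(j'_1)^s \cdots (j'_{l'})^s$, and then take the HNN extension of $H \times J$ centralising $\tilde H$ via a new stable letter $t$. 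Since $\tilde H$ is finitely generated (being isomorphic to $H$), an application of Proposition~\ref{prop:fp2} shows that this HNN extension is $FP_2$, and $t$ commutes in it with $(v_s,1)$ for every $s \in S$.

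The main obstacle is that this naive construction does not embed $H_{V_S}$ into the resulting $K$: the graph subgroup $\tilde H$ meets $H \times \{1\}$ in $(\ker\phi) \times \{1\}$, which is strictly larger than $V_S \times \{1\}$ because $\ker\phi$ is automatically normal in $H$ while $V_S$ is generally not. Consequently $t$ centralises extra elements of $H$ in $K$, and Britton's lemma exhibits non-trivial elements of $H_{V_S}$ that collapse to the identity in $K$. Resolving this is the heart of the argument; paralleling Valiev's strategy as presented in Lyndon--Schupp Section~IV.7, the plan is to combine the HNN construction above with the preceding lemma on intersections of benign subgroups. I would first establish the benignness of the larger auxiliary subgroup $V_\zz = \langle v_s : s \in \zz\rangle$ by a variant of the construction, and then realise $V_S$ as a finite intersection of benign subgroups of $H$, one factor being $V_\zz$ and the other a carefully chosen benign subgroup (containing $V_S$ together with auxiliary finitely generated free factors) whose intersection with $V_\zz$ is exactly $V_S$. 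Identifying the correct auxiliary subgroups and verifying that their intersection is precisely $V_S$, for which a careful analysis of reduced words in the free group $H$ using the explicit form of the $v_s$ is needed, is expected to be the most intricate step.
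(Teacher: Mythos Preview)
Your freeness argument and your overall strategy for homological benignness---first establish that $V_\zz$ is benign, then realise $V_S$ as an intersection of benign subgroups---match the paper's approach. What is missing, however, are the two concrete constructions that make this strategy go through, and without them the proposal remains a sketch rather than a proof.

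First, for $V_\zz$: ``a variant of the construction'' is not enough. The paper builds an explicit ascending HNN-extension of $H$ with stable letter $u$, defined on generators so that $u^{-1}v_su=v_{s+1}$ for every $s$; since $v_0=d$, this gives $\langle d,u\rangle\cap H=V_\zz$, whence $V_\zz$ is (classically) benign. You need to exhibit such an endomorphism of $H$ and verify the shift property.

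Second, for $V_S$: your proposed intersection $V_\zz\cap W$ inside $H$, with $W$ ``carefully chosen'', is left entirely unspecified, and it is not clear any such $W$ exists as a homologically benign subgroup of $H$ itself. The paper instead enlarges the ambient group. Taking $l'=l$ (there is no reason to allow $l'>l$), it forms $K=\langle c_0,d,e\rangle * (\langle c_1,\ldots,c_l\rangle\times J)$, so that only $c_1,\ldots,c_l$ commute with $J$, and then takes an HNN-extension $M$ of $K$ with stable letter $t$ acting by $t^{-1}c_it=c_ij_i$ for $i\geq 1$ and fixing $c_0,d,e$. This yields $t^{-1}v_st=c_0^sc_1^s\cdots c_l^s\,j_1^s\cdots j_l^s\,de^s$, and a reduced-word argument (parallel to your freeness argument) shows that $t^{-1}V_\zz t\cap H=V_S$. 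Thus the two benign subgroups being intersected are $t^{-1}V_\zz t$ and $H$, inside the $FP_2$ group $M$---not $V_\zz$ and some mysterious $W$ inside $H$. This construction is precisely what circumvents the normality obstruction you correctly diagnosed in your first attempt: conjugation by $t$ is not induced by a homomorphism $H\to J$, so its ``kernel'' need not be normal.
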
 

\begin{proof} 
If a reduced word in the elements $v_s$ is written out in terms of the
elements $c_0,\ldots,c_l,d,e$, the only cancellation that can take
place involves $c_0$ and $e$.  Thus the subwords $(c_1^sc_2^s\cdots
c_l^sd)^{\pm 1}$ survive uncancelled, which implies that the elements $v_s$ are
free generators for the subgroup $V_\zz$ of~$H$.

We claim that $V_\zz$ is benign, and hence homologically benign.  To
see this, define an ascending HNN-extension of the free group
$H=\langle c_0,\ldots,c_l,d,e\rangle$ by
\[ u^{-1}c_iu = c_0c_1\cdots c_{i-1}c_ic_{i-1}^{-1}c_{i-2}^{-1}\cdots c_0^{-1}, 
\quad
u^{-1}du = c_0c_1\cdots c_lde, \quad 
u^{-1}eu = e.  \]
Since $u^{-1}v_su=v_{s+1}$ for all $s\in \zz$ and $v_0=d$ it follows that 
$$\langle d,u\rangle \cap H = V_\zz.$$ 
Hence $V_\zz$ is benign in $\langle c_0,\ldots,c_l,d,e,u\rangle$ and
therefore also in the free group $H$.  

Fix some $S\subseteq \zz$, and claim that $V_S$ is homologically 
benign in $H$.  To see 
this, let $J=J(l,S)$ and $j_1,\ldots, j_l\in J$ be as in the statement
of Theorem~\ref{thm:fromufp}, and let $K=K(S)$ be 
$$K=\langle c_0,d,e\rangle * (\langle c_1,c_2,\ldots, c_l\rangle \times 
J) = H*_{\langle c_1,\ldots,c_l\rangle} 
(\langle c_1,c_2,\ldots, c_l\rangle \times J).$$
The group $K$ is $FP_2$, since it has a presentation in which the 
only relators are the relators of $J$ and finitely many commutation 
relators between $c_1,\ldots, c_l$ and the generators of $J$.  

Define an HNN-extension $M=M(S)$ of $K$, with base group $H$ and stable 
letter~$t$ via
$$t^{-1}c_0t=c_0,\quad t^{-1}c_it= c_ij_i\,\,\,\hbox{for $i>0$},\quad 
t^{-1}dt=d,\quad t^{-1}et=e.$$ 
The group $M$ is $FP_2$ and its subgroups $V_\zz$, $t^{-1}V_\zz t$ and $H$ 
are all homologically benign.  The elements $t^{-1}v_st$ freely generate 
the free group $t^{-1}V_\zz t$.  In terms of the generators for~$K$, $t^{-1}v_st = 
c_0^sc_1^s\cdots c_l^sj_1^s\cdots j_l^s de^s$.  When a reduced word in 
the elements $t^{-1}v_st$ is written in these terms, the 
only cancellation that can take place involves $c_0$ and $e$, thus the 
subwords $(c_1^s\cdots c_l^s j_1^s\cdots j_l^sd)^{\pm 1}$ survive 
uncancelled.  It follows that such a reduced word is in $H$ if and 
only if each subword $j_1^s\cdots j_l^s$ is equal to 1, or equivalently
each $s$ that occurs lies in $S$.  Hence 
$V_S$ is equal to $t^{-1}V_\zz t\cap H$ and is homologically benign in 
$M$ and in $H$.    
\end{proof} 

As in~\cite[IV.7]{lynsch}, let $L$ be the free group $L=\langle a,b\rangle$, 
and let $F$ be the free group of rank $l+6$ with $F=\langle 
a,b,c_0, \ldots,c_l, d,e,h\rangle$.  Define a G\"odel numbering $\gamma$ 
of {\sl all} words on the alphabet $\{a,b,a^{-1},b^{-1}\}$ by the formula 
$$\gamma(\emptyset)=0, \quad
\gamma(a)=1, \quad
\gamma(b)=2, \quad
\gamma(a^{-1})=3, \quad
\gamma(b^{-1})=4,$$
and extending to longer words by concatenation, viewing a concatenation 
of digits as a number.  Thus $\gamma$ is a bijection between 
the words and the subset of $\nn$ consisting of zero and all integers whose
decimal digits lie in the set $\{1,2,3,4\}$.  

To any word $w$ on $\{a,b,a^{-1},b^{-1}\}$, associate a codeword $g_w\in L$ 
defined by 
$$g_w:= whc_0^{\gamma(w)}c_1^{\gamma(w)}\cdots c_l^{\gamma(w)}de^{\gamma(w)}.$$ 
The subgroup $G$ of $F$ generated by all the elements $g_w$ is freely 
generated by them.  

\begin{lemma}
The subgroup $G$ is benign in $F$.   
\end{lemma}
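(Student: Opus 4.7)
The plan is to realise $G$ as the intersection $F\cap P$ of two finitely generated subgroups inside an $FP_2$-embeddable enlargement $E$ of $F$, and then to invoke the earlier lemma that intersections of homologically benign subgroups of a common group are homologically benign.

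First, set $S:=\{\gamma(w):w\text{ a word on }a^{\pm 1},b^{\pm 1}\}\subseteq\nn$; since $\gamma(\emptyset)=0$ we have $0\in S$, and Lemma~\ref{lem:anys} furnishes a homologically benign $V_S\leq H$. I then build $E\supseteq F$ as a tower of HNN extensions over finitely generated subgroups by adjoining in turn: an ascending stable letter $u$ as in the proof of Lemma~\ref{lem:anys}, realising $u^{-1}v_su=v_{s+1}$; an ascending stable letter $\tau$ defined on $F$'s generators by $\tau^{-1}c_i\tau=c_i^{10}$, $\tau^{-1}d\tau=d$, $\tau^{-1}e\tau=e^{10}$, which yields $\tau^{-1}v_s\tau=v_{10s}$; and, for each letter $x\in\{a^{\pm 1},b^{\pm 1}\}$, a stable letter $\rho_x$ with $\rho_x^{-1}h\rho_x=xh$. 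Each stable letter is arranged to commute with those generators of $F$ that do not appear in its defining relations. The group $E$ remains $FP_2$ throughout since every HNN extension is over a finitely generated subgroup. Setting $u_x:=\tau\,u^{\gamma(x)}\,\rho_x$, a direct conjugation computation gives
\[ u_x^{-1}g_wu_x = g_{wx}\]
for every word $w$.

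Define $P:=\langle g_\emptyset,u_a,u_b,u_{a^{-1}},u_{b^{-1}}\rangle\leq E$. Iterated conjugation of $g_\emptyset=hd$ shows $g_w\in P$ for every word $w$, so $G\leq P$. For the reverse inclusion the key computation is
\[u_xg_wu_x^{-1}=wx^{-1}h\,\tau v_{\gamma(w)-\gamma(x)}\tau^{-1},\]
which lies in $F$ exactly when $10\mid\gamma(w)-\gamma(x)$, equivalently when the last decimal digit of $\gamma(w)$ is $\gamma(x)$, i.e.\ when the word $w$ ends in the letter $x$; and in that case the right-hand side simplifies to $g_{w'}$ with $w=w'x$. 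A Britton normal form analysis in the tower comprising $E$, analogous to but more elaborate than the one establishing $\langle d,u\rangle\cap H=V_\zz$ in the proof of Lemma~\ref{lem:anys}, then shows that every element of $P$ lying in $F$ can be reduced via these ``$u_x$-eating'' moves to a product of the $g_w^{\pm 1}$, so that $P\cap F=G$.

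Given this identification, the conclusion is routine: since $F$ and $P$ are finitely generated subgroups of the $FP_2$ group $E$, each is homologically benign in $E$ by the first bullet of the earlier benignness lemma, so by the second bullet $G=F\cap P$ is homologically benign in $E$; and the inclusion $F_G\hookrightarrow E_G$ transfers this to $F$. The main obstacle will be the Britton normal form argument in the preceding paragraph: because the HNN extensions composing $E$ are ascending (hence not surjective) and stacked on one another, one must verify that no anomalous $F$-elements arise from their interaction. The specific choices of $\tau$'s $\times 10$ shift and of $\rho_x$'s left multiplication by $x$ are dictated precisely by the requirement that together they diagnose both the G\"odel-exponent mismatch and the word-ending condition.
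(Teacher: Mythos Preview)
Your strategy (exhibit an overgroup $E\supseteq F$ and a finitely generated $P\leq E$ with $G=F\cap P$) is the same as the paper's, but your construction of the overgroup differs.  The paper adjoins four stable letters $u_\lambda$, one for each $\lambda\in\{a^{\pm1},b^{\pm1}\}$, each defining a \emph{single} ascending HNN-extension of $F$ by an explicit endomorphism; one then sets $P=\langle g_\emptyset,u_a,u_b,u_{a^{-1}},u_{b^{-1}}\rangle$.  Your $u_x=\tau\,u^{\gamma(x)}\rho_x$ in fact induces exactly the same endomorphism of $F$ as the paper's $u_\lambda$, so you have rediscovered the paper's map but \emph{factorised} each stable letter into three.  (Incidentally, your opening invocation of $V_S$ via Lemma~\ref{lem:anys} is never used in what follows and can be deleted; also, the lemma asserts that $G$ is \emph{benign}, not merely homologically benign, and since your $E$ is finitely presented your argument would give this stronger conclusion anyway.)

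The substantive gap is in the normal-form step.  Because the paper's $u_\lambda$ are genuine stable letters of $F^*$, Britton's lemma applies directly to words in $g_\emptyset$ and $u_\lambda^{\pm1}$: any such word equal to an element of $F$ contains a pinch $u_\lambda z u_\lambda^{-1}$ with $z\in G$, and one finishes with a single mod-$10$ exponent-sum argument showing that then every $w_i$ occurring in $z$ ends in $\lambda$.  In your tower, Britton's lemma applies to the \emph{primitive} stable letters $u,\tau,\rho_x$, not to your composite $u_x$; a pinch at the $\rho$-level need not be a ``$u_x$-eating move'' without further argument, and you have only verified the key implication ($u_x z u_x^{-1}\in F\Rightarrow u_x z u_x^{-1}\in G$) for $z=g_w$ a single codeword, not for an arbitrary reduced product $z=g_{w_1}^{\epsilon_1}\cdots g_{w_n}^{\epsilon_n}$.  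The latter requires exactly the cancellation-survival and divisibility argument the paper gives.  Your approach can very likely be completed, but the ``more elaborate'' Britton analysis you defer is the entire content of the proof, and the paper's choice to make each $u_\lambda$ a single stable letter is precisely what makes that analysis tractable.
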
 

\begin{proof} 
Almost identical to the argument in~\cite[IV.7]{lynsch}.  Make a group 
$F^*$ defined as the fundamental group of a graph of 
groups with one vertex group $F$, and four edges corresponding to 
stable letters 
$u_\lambda$ for $\lambda\in\{a,b,a^{-1},b^{-1}\}$, each of which 
defines an ascending HNN-extension of $F$ with relations 
\[u_\lambda^{-1}au_\lambda = a, \quad
u_\lambda^{-1}bu_\lambda = b, \quad
u_\lambda^{-1}c_iu_\lambda= c_0^{\gamma(\lambda)}c_1^{\gamma(\lambda)} 
\cdots c_{i-1}^{\gamma(\lambda)}
c_i^{10}
c_{i-1}^{-\gamma(\lambda)}\cdots 
c_0^{-\gamma(\lambda)},\]
\[
u_\lambda^{-1}du_\lambda = c_0^{\gamma(\lambda)}c_1^{\gamma(\lambda)} \cdots
c_l^{\gamma(\lambda)}de^{\gamma(\lambda)},\quad 
u_\lambda^{-1}eu_\lambda = e^{10},\quad 
u_\lambda^{-1}hu_\lambda = \lambda h.
\]
In $F^*$, we have that for any word $w=\lambda_1\cdots \lambda_n$, 
$$u_{\lambda_1}^{-1}\cdots 
u_{\lambda_n}^{-1}g_\emptyset u_{\lambda_n}\cdots u_{\lambda_1}= 
u_{\lambda_1}^{-1}\cdots 
u_{\lambda_n}^{-1}hd u_{\lambda_n}\cdots u_{\lambda_1}= g_w,$$ 
and if $w=u\lambda$ then $u_\lambda g_w u_\lambda^{-1} = g_u$.  

To show that $G$ is benign in $F$, it suffices to show that in $F^*$, 
$$G=F\cap \langle g_\emptyset,u_a,u_b,u_{a^{-1}},u_{b^{-1}}\rangle.$$
From the equations given above, it is clear that the left-hand side is
contained in the right-hand side.  As in~\cite[IV.7]{lynsch}, to prove
the converse it suffices to show that whenever $z\in G$ and
$\lambda\in\{a,b,a^{-1},b^{-1}\}$ are such that $u_\lambda
zu_\lambda^{-1}\in F$, then in fact $u_\lambda z u_\lambda^{-1} \in G$, 
or equivalently $z\in u_\lambda^{-1} Gu_\lambda$.
For this, write $z=g_{w_1}^{\epsilon_1}\cdots g_{w_n}^{\epsilon_n}$ as a
reduced word in the elements $g_w$, with $\epsilon_i=\pm 1$.  When
this expression for $z$ is rewritten in terms of the generators for
$F$ and reduced, each subword of the form
$(c_1^{\gamma(w_i)}c_2^{\gamma(w_i)}\cdots c_l^{\gamma(w_i)}d)^{\epsilon_i}$ 
survives uncancelled, and any two 
such subwords are separated by a non-trivial word in the other 
generators $a,b,c_0,e,h$.  
Each of the natural free generators for $u_\lambda^{-1}Fu_\lambda$ except  
$u_\lambda^{-1}du_\lambda= c_0^{\gamma(\lambda)}c_1^{\gamma(\lambda)}\cdots 
c_l^{\gamma(\lambda)}de^{\gamma(\lambda)}$ has total exponent of each 
$c_i$ divisible by~10.  From this it follows that each $\gamma(w_i)$ 
is congruent to $\gamma(\lambda)$ modulo 10, and hence that 
$w_i = x_i\lambda$ for some shorter word $x_i$, so that $z\in 
u_\lambda^{-1}Gu_\lambda$ as required.  
\end{proof}

\begin{corollary}\label{cor:allbenign} 
Every subgroup of the free group $L=\langle a,b\rangle$ 
is homologically benign.  
\end{corollary}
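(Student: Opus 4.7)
The plan is to follow the template of the preceding two lemmas: given an arbitrary subgroup $U \leq L$, G\"odel-encode the words representing elements of $U$ by a set $S \subseteq \nn$, feed $S$ into Lemma~\ref{lem:anys} to obtain a homologically benign $V_S$, carve out the codewords for $U$ as an intersection of benign subgroups of $F$, and finally build an HNN-extension of $F$ whose stable letter cuts out exactly $U$ from $L$.

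In detail, let $W$ be the set of all words in $\{a,b,a^{-1},b^{-1}\}^*$ representing elements of $U$, so $\emptyset \in W$, and set $S := \gamma(W) \subseteq \nn$, so $0 \in S$. Lemma~\ref{lem:anys} gives $V_S$ homologically benign in $H$; since $F_{V_S}$ is the amalgamated product $F *_H H_{V_S}$, and amalgamating $F$ with an $FP_2$ overgroup of $H_{V_S}$ along the finitely generated subgroup $H$ produces an $FP_2$ group, the benignness of $V_S$ lifts to $F$. The subgroup $P := L * \langle h \rangle * V_S \leq F$ is then benign in $F$ as the join of the finitely generated subgroup $L * \langle h \rangle$ with $V_S$. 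The preceding lemma gives that $G$ is benign in $F$, so $G_U := G \cap P$ is benign in $F$. A normal-form computation in the free product $F = L * \langle h \rangle * H$, using that $G$ is freely generated by the codewords $g_w$ and that $V_\zz$ is freely generated by the $v_s$, identifies $G_U$ with $\langle g_w : w \in W \rangle$.

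I would then set $N := \langle G_U, h, V_S \rangle$, benign in $F$ as a join of benign subgroups, and verify $N \cap L = U$. The inclusion $U \leq N$ is explicit: for each $w \in W$, the equation $g_w = whv_{\gamma(w)}$ yields $w = g_w v_{\gamma(w)}^{-1} h^{-1} \in N$. For the reverse inclusion I would use the retraction $\pi \colon F \to L$ sending $h$ and the generators of $H$ to $1$; under $\pi$, the three generating sets of $N$ map onto $U$, $\{1\}$, and $\{1\}$ respectively, so $\pi(N) = U$, and any $x \in N \cap L$ satisfies $x = \pi(x) \in U$. Benignness of $N$ in $F$ gives an embedding $F_N \hookrightarrow K$ with $K$ of type $FP_2$, and Britton's normal-form theorem for the HNN-extension $F_N$ identifies $\langle L, t \rangle \leq F_N$ with the HNN-extension of $L$ over $L \cap N = U$, namely $L_U$. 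Composing yields $L_U \hookrightarrow F_N \hookrightarrow K$, as required.

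The step I expect to be most delicate is the identification $G_U = \langle g_w : w \in W \rangle$: one has to check that a reduced word in the $g_w^{\pm 1}$, when expanded in $F$ using $g_w = whv_{\gamma(w)}$, can lie in $L * \langle h \rangle * V_S$ only when every $v_{\gamma(w_i)}$-block that survives cancellation already lies in $V_S$, which forces each $w_i$ to belong to $W$. Once this exponent-tracking and the retraction argument pinning down $N \cap L$ are in hand, the corollary assembles formally from the two lemmas on benign subgroups proved in this section.
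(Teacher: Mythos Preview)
Your argument is correct and follows essentially the same route as the paper: encode the subgroup via $S=\gamma(W)$, use Lemma~\ref{lem:anys} to get $V_S$ homologically benign, form $P=\langle a,b,h\rangle * V_S$ (which is the paper's $Y_S$), intersect with $G$ to obtain $\langle g_w:w\in W\rangle$, and then enlarge to a benign subgroup whose intersection with $L$ is the given subgroup. The only cosmetic differences are that the paper adjoins all of $c_0,\ldots,c_l,d,e,h$ rather than just $h$ and $V_S$, and identifies the resulting subgroup explicitly as the free product $U*\langle c_0,\ldots,c_l,d,e,h\rangle$ instead of using your retraction $\pi:F\to L$; both computations are equally easy, and your flagged ``delicate'' step $G\cap P=\langle g_w:w\in W\rangle$ is exactly the one the paper asserts as ``easy to see''.
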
 

\begin{proof} 
Let $N$ be a subgroup of $L$, and define a subset $S=S(N)\subseteq \nn$ 
as the set of G\"odel codes for words $w$ on $\{a,b,a^{-1},b^{-1}\}$ that
are equal (as elements of $L$) to an element of~$N$: 
$$S=\{\gamma(w)\colon w \in_L N\}.$$  
Now let $Y_S$ be 
the free product $\langle a,b,h\rangle*V_S\leq F$, where $V_S$ is 
as defined in the statement of Lemma~\ref{lem:anys}.  By that lemma, 
$V_S$ is homologically benign, and hence $Y_S$ is homologically benign 
in $F$.  Since $Y_S$ is freely generated by $\{a,b,h,v_s\colon s\in S\}$, 
it is easy to see that $G\cap Y_S$ is freely generated by $\{g_w\colon
w\in N\}$.  (Recall that $v_s = c_0^sc_1^s\cdots c_l^sde^s$.)  Hence 
$G\cap Y_S$ is homologically benign.  The subgroup generated by 
$G \cap Y_S$ and the finite set $\{c_0,\ldots,c_l,d,e,h\}$, which 
is equal to $N*\langle c_0,\ldots,c_l,d,e,h\rangle$, is therefore also
homologically benign and the intersection of this group with $L$
is equal to $N$.  
\end{proof} 

We are now ready to complete the proof of Theorem~\ref{thm:main}.  By
the Higman-Neumann-Neumann embedding theorem~\cite{hnn,lynsch}, any countable
group can be embedded in a 2-generator group.  This 2-generator group
is isomorphic to $L/N$ for some normal subgroup $N$.  By
Corollary~\ref{cor:allbenign}, $N$ is homologically benign, and so by
Lemma~\ref{lem:higrope}, $L/N$ can be embedded in an $FP_2$ group.

\section{Closing remarks} 

An opinion attributed to Gromov~\cite[Ch.~1]{gdelah} is that any statement 
that is valid for every countable group should be trivial.  With this 
in mind, is there an easier, more direct proof of
Theorem~\ref{thm:main}?  Is there one that is not modelled on a proof 
of the Higman embedding theorem and that does not does not rely on 
Theorem~\ref{thm:fromufp}, or other results from~\cite{BB,ufp}?

To prove Theorem~\ref{thm:main}, we only need the groups $J(l,S)$ 
for some fixed $l\geq 4$.  Our motivation for allowing $l$ to vary 
comes from the above question.  For any $l\geq 4$ and any $S$ with $0\in
S\subseteq \zz$, define a group $J'(l,S)$ by the presentation 
\[J'(l,S) = \langle j_1,\ldots, j_l \colon j_1^sj_2^s\cdots j_l^s=1 \,\,\,
s\in S\rangle.\] 
If one could show that $J'(l,S)$ embeds in a group of type $FP_2$ 
and that $j_1^sj_2^s\cdots j_l^s\neq 1$ if $s\notin S$ without 
invoking~\cite{BB,ufp}, one would obtain a different proof of 
Theorem~\ref{thm:main}.  
If $l\geq 13$, the given presentation for $J'(l,S)$ satisfies 
the $C'(1/6)$ small cancellation condition~\cite[Ch.~5]{lynsch}.  
This can be used to give a different proof that 
$j_1^sj_2^s\cdots j_l^s\neq 1$ for $s\notin S$.  

The proof of the Higman-Neumann-Neumann embedding theorem 
in~\cite[IV.3]{lynsch} implies that any $FP_2$ group embeds 
in a 2-generator $FP_2$ group.  It follows that 
every countable group embeds in a 2-generator $FP_2$ group.  

The groups $J=J(l,S)$ in Theorem~\ref{thm:fromufp} may be chosen to have
cohomological dimension $\cd J=2$ in addition to the stated
properties.  By keeping track of the cohomological dimension at each
stage of the argument one obtains the following strengthened version of 
Corollary~\ref{cor:allbenign}, and hence a strengthened version of
Theorem~\ref{thm:main}: 

\begin{corollary}
For every subgroup $N$ of the free group $L=\langle a,b\rangle$, the 
HNN-extension $\langle L, t\colon t^{-1}nt= n \,\,\,n\in N\rangle$ 
embeds in an $FP_2$ group of cohomological dimension five.  
\end{corollary}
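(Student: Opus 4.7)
The plan is to follow the proof of Corollary~\ref{cor:allbenign} while tracking the cohomological dimension of the ambient $FP_2$ group at each stage, using the standard bounds
\[
\cd(A*_C B) \le \max\{\cd A,\,\cd B,\,\cd C+1\}, \qquad \cd(A*_C) \le \max\{\cd A,\,\cd C+1\},
\]
together with $\cd(A\times B)\le \cd A+\cd B$. As asserted in the remark preceding the statement, one first chooses $J=J(l,S)$ with $\cd J=2$, by arranging the construction of~\cite{ufp} to use a 2-dimensional flag complex $L$ with perfect $\pi_1$ and a non-nullhomotopic edge loop of length $l$. This is the base of the computation.

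Retracing Lemma~\ref{lem:anys}: since $\langle c_1,\dots,c_l\rangle$ and $H$ are free of cd 1, the product $\langle c_1,\dots,c_l\rangle\times J$ has cd $\le 3$, so $K$ and its HNN-extension $M$ (with free associated subgroup $H$) have cd $\le 3$. The ascending HNN-extension of $H$ witnessing $V_\zz$ benign is free-by-cyclic of cd 2. The closure operations (intersection and join of homologically benign subgroups, and passage to a larger ambient finitely generated group) are implemented by amalgamations and HNN-extensions of the ambient $FP_2$ groups over finitely generated free subgroups, and each such step contributes at most $\cd(\text{free})+1=2$ to the cd bound via the edge-plus-one term.

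Applying this tracking to the proof of Corollary~\ref{cor:allbenign} --- successively for $V_S$ homologically benign in $H$, for $Y_S=\langle a,b,h\rangle*V_S$, for $G\cap Y_S$ (with $G$ homologically benign in $F$ via the cd 2 HNN-extension $F^*$), for the join with the finite set $\{c_0,\dots,c_l,d,e,h\}$, and for the intersection with $L$ --- gives at each stage an explicit cd bound. Checking the arithmetic, one finds that the bound stays at most 5, and the HNN-extension $\langle L,t\colon t^{-1}nt=n,\,n\in N\rangle$ embeds in the resulting $FP_2$ group as a subgroup of the corresponding HNN-extension $F_N$ of the larger free group. The main obstacle is the precise cd bookkeeping through the closure lemma, in particular the join closure, whose implementation combines several amalgamations and HNN-extensions; one must verify that these can be arranged so that the cumulative cd increase stays within the claimed bound of 5 rather than accumulating further.
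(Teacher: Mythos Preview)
Your proposal is correct and follows exactly the approach the paper indicates: choose $J=J(l,S)$ with $\cd J=2$ and then track cohomological dimension through the constructions of Lemmas~\ref{lem:anys}--\ref{cor:allbenign} using the standard bounds for amalgamated products, HNN-extensions, and direct products. The paper itself does not give the detailed bookkeeping either, merely asserting that tracking cd at each stage yields the bound of five; your identification of the join closure step as the place requiring most care is accurate, and your initial computations ($\cd K\le 3$, $\cd M\le 3$) are correct starting points.
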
 

\begin{theorem} \label{thm:maincd}
Every countable group $G$ embeds in a 2-generator $FP_2$ group $G^*$, with 
$\cd G^*\leq \cd G+5$.  Every torsion element in $G^*$ is conjugate to
an element of $G$.  
\end{theorem}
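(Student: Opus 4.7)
The plan is to run the proof of Theorem~\ref{thm:main} while tracking cohomological dimension and torsion at each step, using the strengthened version of Corollary~\ref{cor:allbenign} announced just above. First, I would embed $G$ into a 2-generator group $\bar G = L/N$ via the Higman--Neumann--Neumann construction. That construction is built from free products and HNN-extensions with free associated subgroups, so the bound $\cd \bar G \leq \max(\cd G, 2)$ is standard, and every torsion element of $\bar G$ is conjugate into $G$ (since torsion in an HNN-extension is conjugate to the base, and in a free product to a factor). When $\cd G \geq 2$ this already gives $\cd \bar G \leq \cd G$; the small cases $\cd G \in \{0,1\}$ are handled directly (the trivial group and countable free groups embed into $1$ and $F_2$ respectively).

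Next, apply the strengthened Corollary to the subgroup $N \leq L$ to obtain an $FP_2$ group $H$ with $\cd H = 5$ containing $L_N = \langle L, t : t^{-1}nt = n, n \in N\rangle$. Since $\cd H$ is finite, $H$ is torsion-free. Run the construction of Lemma~\ref{lem:higrope} with $F = L$ and $R = N$ to get an $FP_2$ group $K$ containing $\bar G = L/N$; this $K$ is an HNN-extension with base $H \times \bar G$ and associated subgroup isomorphic to $L \ast_N L$. Because $L \ast_N L$ is an amalgam of free groups over a free subgroup it has cohomological dimension at most $2$, so the Mayer--Vietoris bound for HNN-extensions gives
$$\cd K \leq \max\bigl(\cd(H \times \bar G),\, \cd(L \ast_N L) + 1\bigr) \leq \max(5 + \cd \bar G,\, 3) = 5 + \cd \bar G \leq 5 + \cd G.$$
Torsion in $K$ is conjugate to torsion in $H \times \bar G$; torsion-freeness of $H$ confines such torsion to $\{1\} \times \bar G$, and from there it is conjugate to an element of $G$ by the previous paragraph.

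Finally, embed $K$ into a 2-generator $FP_2$ group $G^*$ using the $FP_2$-preserving form of the Higman--Neumann--Neumann theorem noted in the Closing Remarks; the associated subgroups of that construction are finitely generated and free, so the HNN bound gives $\cd G^* \leq \max(\cd K, 2) = \cd K$, using that $\cd K \geq \cd H = 5$. Once again, torsion in $G^*$ is conjugate into $K$, hence into $G$, completing both the cd bound and the torsion statement.

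The main obstacle is the bookkeeping: verifying that at each of the three stages (reducing to $2$ generators, applying Lemma~\ref{lem:higrope}, and reducing back to $2$ generators) the HNN construction is actually set up with a sufficiently low-dimensional associated subgroup that the cohomological dimension bound $\max(\cd B, \cd A + 1)$ contributes nothing beyond $\cd B$, and that the torsion-in-the-base property propagates coherently through all the intermediate HNN-extensions, free products, and direct products. The one place where torsion-freeness of an auxiliary group is actually used --- to prevent $H$ from contributing new torsion classes to $H \times \bar G$ --- is the reason one needs the strengthened Corollary rather than just Corollary~\ref{cor:allbenign}.
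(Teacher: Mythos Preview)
Your proposal is correct and is precisely the argument the paper has in mind: the paper does not spell out a proof of Theorem~\ref{thm:maincd} beyond saying that one obtains it ``by keeping track of the cohomological dimension at each stage of the argument'' together with the strengthened Corollary, and your three-stage bookkeeping (HNN reduction to $L/N$, the rope-trick Lemma~\ref{lem:higrope} with the cd-$5$ group $H$ from the strengthened Corollary, then the $FP_2$-preserving HNN embedding back to two generators) is exactly that. Your handling of the torsion claim via torsion-freeness of $H$ (from $\cd H<\infty$) and the standard torsion-in-the-base facts for HNN extensions and free products is also the intended route.
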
  

The proof of the Higman embedding theorem in~\cite[IV.7]{lynsch} shows 
that every recursively presented group $G$ of finite cohomological
dimension embeds in a finitely presented group~$G^*$ of finite 
cohomological dimension.  However, $\cd G^*$ increases with the complexity 
of the Diophantine equation used to encode the relators in $G$.  Applying
Sapir's aspherical version of the Higman embedding theorem~\cite{sapir} 
gives the following.  
\begin{theorem}
For every recursive subgroup $N$ of the free group 
$L=\langle a,b\rangle$, the HNN-extension 
$\langle L, t\colon t^{-1}nt= n \,\,\,n\in N\rangle$ embeds in 
a finitely presented group of cohomological dimension two.  
\end{theorem}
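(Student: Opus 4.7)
The plan is to apply Sapir's aspherical Higman embedding theorem from \cite{sapir} directly to the HNN-extension
$L_N = \langle L, t \colon t^{-1}nt = n,\ n \in N\rangle$.
That theorem asserts that any recursively presented group admitting a recursive aspherical presentation embeds into a finitely presented group admitting a finite aspherical presentation; a group with a finite aspherical presentation has a $K(G,1)$ of dimension $2$ and hence cohomological dimension at most $2$. So the task reduces to exhibiting a recursive aspherical presentation for $L_N$.

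First I would produce a recursively enumerable free basis for $N$. Since $N$ is recursive it is in particular recursively enumerable, and applying Nielsen reduction to nested finite initial segments of an enumeration yields, compatibly, an r.e.\ free basis $\{n_1, n_2, \ldots\}$ for $N$, with each $n_i$ presented as an explicit word in $a$ and $b$. Then
$L_N = \langle a, b, t \colon t^{-1}n_it n_i^{-1},\ i = 1, 2, \ldots\rangle$,
a recursive presentation. I would then verify that the associated presentation $2$-complex $Y$ is aspherical by identifying it with the total space of the Bass-Serre graph-of-spaces decomposition of $L_N$ as the HNN-extension of the free group $L$ over the free subgroup $N$. Topologically $Y$ is glued from $K(L,1) = S^1 \vee S^1$ together with cylinders $S^1 \times [0,1]$, one for each $n_i$, whose two ends are each attached to the loop representing $n_i$ in $S^1 \vee S^1$. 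Each piece is aspherical, so $Y$ is a $2$-dimensional $K(L_N,1)$, and in particular $\cd L_N \leq 2$.

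Feeding this recursive aspherical presentation into Sapir's theorem produces a finitely presented group $G^*$ containing $L_N$ with an aspherical finite presentation, so $\cd G^* \leq 2$, which is what the theorem claims.

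The main obstacle, or really the main external dependence, is matching the precise hypotheses of \cite{sapir}: one must check that Sapir's notion of asphericity on the input is implied by topological asphericity of the presentation $2$-complex of an infinitely related recursive presentation, and that the asphericity condition passed to the finitely presented overgroup is strong enough to force $\cd \leq 2$. Granted this bookkeeping against \cite{sapir}, no further calculations are required.
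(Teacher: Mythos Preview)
Your approach is exactly the paper's: the paper gives no argument beyond the sentence ``Applying Sapir's aspherical version of the Higman embedding theorem gives the following,'' and you have correctly filled in the implicit step, namely that $L_N$ carries a recursive aspherical presentation to which \cite{sapir} applies.

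One small slip in your explicit description of the presentation $2$-complex: as written, $S^1\vee S^1$ with a separate cylinder $S^1\times[0,1]$ for each $n_i$ (both ends on the $n_i$-loop) has no $t$-edge and has fundamental group $\langle a,b,t_1,t_2,\ldots:\,t_i^{-1}n_it_i=n_i\rangle$, not $L_N$. The correct Bass--Serre picture takes the edge space to be the wedge $E=\bigvee_i S^1$ (a $K(N,1)$ because the $n_i$ form a free basis), so that the basepoint of $E$ crossed with $[0,1]$ furnishes the single $t$-loop and all the relator squares share it; with that fix your identification of $Y$ with the graph-of-spaces total space, and hence its asphericity, goes through.
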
 
Combining this with the Higman rope trick~\cite[IV.7.6]{lynsch} 
gives a version of the Higman embedding theorem which 
is an analogue of~Theorem~\ref{thm:maincd}, but with a better bound on
$\cd G^*$.   

\begin{theorem} 
Every recursively presented group $G$ embeds into a finitely presented
2-generator group~$G^*$ with $\cd G^*\leq \cd G+2$.  Every torsion element in 
$G^*$ is conjugate to an element of $G$.  
\end{theorem}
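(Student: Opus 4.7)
The plan is to follow the proof of Theorem~\ref{thm:maincd} essentially verbatim, with the preceding theorem (Sapir's aspherical Higman embedding) taking the role of Corollary~\ref{cor:allbenign} and the classical Higman rope trick~\cite[IV.7.6]{lynsch} taking the role of Lemma~\ref{lem:higrope}. The improvement of the additive constant from $+5$ to $+2$ in the $\cd$ bound reflects exactly the corresponding drop in the cohomological dimension of the ambient finitely presented group (from $5$ in Corollary~\ref{cor:allbenign} to $2$ in the preceding theorem).

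In detail: given a recursively presented group $G$, first embed $G$ in a $2$-generator recursively presented group $G'\cong L/N$ via the Higman--Neumann--Neumann construction, where $L=\langle a,b\rangle$ and $N\lhd L$ is recursively enumerable. By the preceding theorem applied to $N$, the HNN-extension $F_N=\langle L,t\colon t^{-1}nt=n,\,\,n\in N\rangle$ embeds in a finitely presented group $H$ with $\cd H\leq 2$, so $N$ is benign in $L$ in the classical sense. Now mimic the proof of Lemma~\ref{lem:higrope}: form $K=\langle H\times L/N,\,s\colon s^{-1}(l,1)s=(l,\phi(l)),\,\,l\in L*_NL\rangle$ with $\phi$ as defined there. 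The relators of $L/N$ are consequences of the $s$-relators applied to generators of $L$ together with the relators of $H$, so $K$ has a finite presentation and contains $L/N\supseteq G$. Finally, embed $K$ into a $2$-generator finitely presented group $G^*$ via the HNN construction of~\cite[IV.3]{lynsch}.

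For the cohomological dimension, I would track $\cd$ through each stage as in Theorem~\ref{thm:maincd}: the $2$-generating Higman--Neumann--Neumann HNN raises $\cd$ by at most one above $\cd G$, the direct product with $H$ raises $\cd$ by at most $\cd H=2$, and each subsequent HNN raises $\cd$ by at most one. Substituting the bound $\cd H\leq 2$ from the preceding theorem in place of $\cd H\leq 5$ from Corollary~\ref{cor:allbenign} trims three off the additive constant and yields $\cd G^*\leq\cd G+2$. For the torsion statement, note that $H$ is torsion-free (since $\cd H\leq 2$); hence every torsion element of $H\times L/N$ lies in $\{1\}\times L/N$, and every torsion element of $L/N$ is conjugate into $G$ because $L/N$ is obtained from $G*F_2$ by an HNN-extension, whose torsion is always conjugate into the base. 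The same argument applied to the rope-trick HNN (base $H\times L/N$) and the final $2$-generating HNN (base $K$) shows that every torsion element of $G^*$ is conjugate into~$G$.

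The main obstacle is the $\cd$ bookkeeping needed to hit the sharp bound $\cd G^*\leq\cd G+2$ rather than some looser constant: one must check that the $2$-generator embedding $G\hookrightarrow L/N$, the formation of $H\times L/N$, and the two HNN-extensions can each be arranged so that the $\cd$ increments add up to exactly~$2$, with no hidden slack. A subsidiary point is that the preceding theorem is phrased for ``recursive'' $N$, while the construction naturally produces $N$ recursively enumerable; one must check that Sapir's construction applies to r.e.~$N$, which it does since the relevant HNN-extension of $L$ already has a recursively enumerable presentation.
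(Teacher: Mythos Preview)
Your approach matches the paper's own sketch exactly: the theorem is presented as the combination of the preceding theorem (Sapir's aspherical Higman embedding) with the classical Higman rope trick~\cite[IV.7.6]{lynsch}, run in parallel with the proof of Theorem~\ref{thm:maincd}, and your torsion argument is the standard one.

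One remark on the $\cd$ bookkeeping, which you rightly flag as the delicate point: the increments you list ($+1$ for the HNN embedding, $+2$ for the product with $H$, $+1$ for each of two further HNN steps) sum to $+5$, not $+2$. The sharpening comes from using $\cd(\text{HNN})\leq\max(\cd(\text{base}),\,1+\cd(\text{associated}))$ rather than the crude $\cd(\text{base})+1$: all associated subgroups arising here are free or of the form $F*_N F$ with $F$ free, hence have $\cd\leq 2$, so the HNN steps add nothing once the base has $\cd\geq 3$, and the only genuine increase is the $+\cd H=+2$ from the direct product.
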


\leftline{\bf Author's address:}

\obeylines

\smallskip
{\tt i.j.leary@soton.ac.uk}

\smallskip
School of Mathematical Sciences, 
University of Southampton, 
Southampton,
SO17 1BJ

\end{document}